\newcommand{\tl}[1]{#1}
\newcommand{\R}{\mathbb{R}}
\newcommand{\E}{\mathcal{E}}
\newcommand{\Per}{P}
\newcommand{\Div}{\operatorname{div}}
\newtheorem{theorem}{Theorem}
\newtheorem{lemma}{Lemma}
\newtheorem{corollary}{Corollary}
\theoremstyle{remark}
\newtheorem{remark}{Remark}
\author{Antonin Chambolle}
\address{A.C.: CMAP, Ecole Polytechnique, CNRS,
	91128 Palaiseau, France}
\email{antonin.chambolle@cmap.polytechnique.fr}
\author{Tim Laux}
\address{T.L.: University of Bonn, Hausdorff Center for Mathematics, Villa Maria, Endenicher Allee 62, D-53115 Bonn, Germany}
\email{tim.laux@hcm.uni-bonn.de}
\begin{document}

\title[Wasserstein flow of the perimeter]{Mullins-Sekerka as the Wasserstein flow\\ of the perimeter}

\begin{abstract}
  We prove the convergence of an implicit time discretization for the \tl{one-phase} Mullins-Sekerka equation\tl{, possibly with additional non-local repulsion,} proposed in
  [F.~Otto, Arch.\ Rational Mech.\ Anal.\ 141 (1998) 63--103].
  Our simple argument shows that the limit satisfies the equation in a distributional sense as well as an optimal energy-dissipation relation.
  The proof combines arguments from optimal transport, gradient flows \& minimizing movements, and basic geometric measure theory. 
  
   \medskip
  
  \noindent \textbf{Keywords:} Gradient flows, Wasserstein distance, sets of finite perimeter, Mullins-Sekerka, free boundary problems, \tl{Hele-Shaw cell}
  
  \medskip

  \noindent \textbf{Math.\ Subject Classification}: 35A15, 35R37, 49Q20, 76D27, 90B06, 35R35
\end{abstract}

\maketitle

\section{Introduction}
	The Mullins-Sekerka equation, see \eqref{eq:MS_1a_strong}--\eqref{eq:MS_2b_strong} below for its exact formulation \tl{with additional non-local repulsion},  is a well-studied mathematical model which, among other phenomena, describes a Hele-Shaw cell: A viscous ferro-fluid is confined to a thin region between two parallel horizontal plates. Applying a strong magnetic field in the vertical direction leads to two opposing forces: (i) due to surface tension, the fluid wants to decrease its surface area; (ii) the probe becomes magnetized by the field and the particles repel each other due to the induced magnetic field. 
	These two competing effects lead to the formation of intriguing patterns. 
	
	\medskip
	
        In this paper we construct weak solutions using an implicit time discretization proposed by F.~Otto in \cite{otto1998dynamics} \tl{for the Mullins-Sekerka equation, possibly with non-local repulsion}.
        Because of the gradient-flow structure of the equation, it is natural to consider minimizing movements, an implicit time discretization which comes as a sequence of variational problems \cite{de1992movimenti}.  The effective energy consists of two terms, (i) an attractive term due to surface tension, the total surface area of the lateral boundary of the region occupied by the fluid, and (ii) a non-local term due to the magnetic repulsion of the particles; see \eqref{eq:def energy} below.
	In \cite{otto1998dynamics} it has been observed that the dissipation functional  may be modeled by the Wasserstein distance, which arises in optimal mass transport; see \eqref{eq:def W2}. The Wasserstein distance  plays a crucial role for many diffusion equations as was pointed out by Jordan, Kinderlehrer, and Otto in the seminal work \cite{JKO98}, see also \cite{JKOPhys}.

	\medskip
	
	The main theorem of the present work is a refined version of the announced result \cite[Theorem 1]{otto1998dynamics}, for which a
        detailed proof was not provided.
	Our simple proof establishes the convergence of the approximations obtained from the minimizing movements scheme to a weak solution. We derive the Mullins-Sekerka equation  \eqref{eq:MS_1a_strong}--\eqref{eq:MS_2b_strong} in a distributional form, and using De Giorgi's variational interpolations \cite{AGS}, we show that the limit satisfies an optimal energy-dissipation relation.
	The convergence of the energies as $h\to0$, a well-known assumption known from the more difficult  case of mean curvature flow \cite{LucStu95}, is not necessary in our case. 
	In fact, our proof is much simpler and no regularity theory of almost minimal surfaces is needed. 
	\tl{Dropping the assumption of energy convergence, however, comes at the price of a weaker solution concept involving varifolds instead of sets of finite perimeter. 
	We need of course to assume in addition the convergence of the time-integrated energies in order  to recover the expected formulation for sets of finite perimeter.}
	It may be expected that this solution concept, \tl{at least in the case of energy convergence,} satisfies a weak-strong uniqueness principle similar to the ones in the forthcoming works by Fischer, Hensel, Simon, and one of the authors for multiphase mean curvature flow \cite{FHLS_MCF} and for the simpler \emph{two-phase} Mullins-Sekerka equation \cite{FHLS_MS}.
	
	\medskip
	
	There has been continuous interest in the Mullins-Sekerka equation and similar gradient flows, so we only briefly point out some of the most relevant results related to the present work. 
	Weak solutions to the two-phase Stefan problem have been constructed by Luckhaus \cite{luckhaus1999solutions}. In particular, Luckhaus discovered a hidden variational principle satisfied by his approximations, which allows to verify the convergence of the energies as $h\to0$.
	Luckhaus and Sturzenhecker \cite{LucStu95} 	constructed weak solutions of mean curvature flow and the two-phase Mullins-Sekerka equation conditioned on the convergence of the energies. 
	R\"{o}ger \cite{roger2005existence} was able to remove the assumption in the case of this two-phase Mullins-Sekerka equation by showing that the assumption may only be violated along flat parts of $\partial E$. 
	In the case of mean curvature flow, the assumption can be verified in very particular cases, like convex sets \cite{caselles2006anisotropic}, graphs \cite{logaritsch2016obstacle}, and mean convex sets \cite{guidotim}.
	For generalizations to the anisotropic case, which for mean curvarture flow has already been introduced by Almgren, Taylor, and Wang \cite{ATW93}, we refer the interested reader to Garcke and Schaubeck \cite{garcke2011existence} and Kraus \cite{kraus2011degenerate}.
	A variant relevant for image denoising has been introduced by  Carlier and Poon \cite{carlier2019total} who relax the constraint $\chi_E \in \{0,1\}$, which leads to the total variation flow. However, it seems that the convergence can only be proven under an additional assumption on the density.
	Glasner \cite{glasner2002diffuse} introduced a phase-field approximation to the one-phase Mullins-Sekerka equation and studied its convergence by formal asymptotic expansions. 
	\tl{While the analysis of the non-degenerate Cahn-Hilliard equation to the two-phase Mullins-Sekerka equation is by now well-understood, see for example the work of Chen \cite{chen1996global} and Alikakos, Bates, and Chen \cite{alikakos1994convergence}, there seems to be no result for this degenerate version.}
	Recently, also the computationally efficient thresholding scheme by Merriman, Bence, and Osher \cite{MBO92, MBO94} has been reinterpreted as a minimizing movements scheme by Esedo\u{g}lu and Otto \cite{EseOtt14}, which allowed one of the author together with Otto to prove conditional convergence results to multiphase mean curvature flow \cite{laux2016convergence,laux2017brakke}.
	Most recently, Jacobs, Kim, and M\'{e}sz\'{a}ros \cite{jacobs2019weak} introduced an interesting thresholding-type approximation for the Muskat problem and proved a similar (conditional) convergence result for their scheme.

	\medskip
	
	The paper is organized as follows: In \S \ref{sec:result} we recall the minimizing movements scheme and state our main result\tl{,} Theorem \ref{thm}, which will be proved in the following sections: \S \ref{sec:compactness} establishes the compactness; in \S \ref{sec:convergence} we recover the distributional equation for the limit and the optimal energy-dissipation relation; and \S \ref{sec:interpolation inequality} contains a simple nonlinear interpolation inequality and its proof.
	
	\section*{Acknowledgement}
	The authors would like to warmly thank Prof.\ Felix Otto, Max Planck Institute (MPI), Leipzig,
        for mentioning this problem, and for numerous discussions and suggestions.
        This work was started during Mathematisches Forschungsinstitut Oberwolfach's (MFO) workshop \#1746 and completed during MFO's workshop \#1904, as well as visits to MPI Leipzig. The hospitality and support of these institutions is also acknowledged. This project was partially funded by the Deutsche Forschungsgemeinschaft (DFG, German Research Foundation) under Germany's Excellence Strategy - EXC-2047/1 - 390685813.

\section{Statement of the main result}\label{sec:result}

The physical model \tl{under consideration} is described by the following \tl{system of} equations \tl{for an evolving set $(E(t))_{t\in(0,\infty)}$ and a velocity field $\mathbf{u} \colon \R^d \times (0,\infty) \to \R^d$}: The interface $\partial E(t)$ is transported by the fluid
\begin{equation}\label{eq:MS_1a_strong}
V= \mathbf{u}\cdot \nu \quad \text{on } \partial E(t)
\end{equation}
(throughout, $\nu$ denotes the \tl{\emph{outer}} unit normal to $\partial E(t)$); 
the fluid is incompressible
\begin{equation}\label{eq:MS_1b_strong}
\Div \mathbf{u} = 0  \quad \text{in }  E(t);
\end{equation}
the flow is irrotational, i.e., 
\begin{equation}\label{eq:MS_2a_strong}
\text{there exists } p \text{ such that } \mathbf{u} = -\nabla p \text{ in }E(t);
\end{equation}
and \tl{on the interface, the following balance-of-forces condition holds}
\begin{equation}\label{eq:MS_2b_strong}
p= H + 2 k\ast \chi_{E(t)} \quad \text{on } \partial E(t),
\end{equation}
\tl{where $H$ denotes the mean curvature of $\partial E(t)$ with the sign convention $H>0$ for convex $E(t)$ and $k$ is a non-negative, symmetric, and normalized convolution kernel $k\ge 0$, $k(-z)=k(z)$, and $\int k=1$.}

\tl{Since the velocity field $\mathbf{u}$ is divergence-free, any} smooth solution $E(t)$ is volume-preserving $\frac{d}{dt}|E|= 0$.
 More importantly, \tl{combining \eqref{eq:MS_1a_strong}--\eqref{eq:MS_2b_strong}, we see that any such evolution is} energy dissipating 
\begin{equation}\label{eq:MS_halfDGstrong}
\frac{d}{dt} \tl{ \left( \mathcal{H}^{n-1}(\partial E(t)) + \int_{E(t)} k * \chi_{E(t)} \,dx\right) }= -\int_{E(t)} |\mathbf{u}|^2\,dx \leq 0.
\end{equation}
More precisely, the above set of equations have a gradient-flow structure. 

\tl{Here, the} metric tensor $\int_E | \mathbf{u}|^2\,dx$ defined on divergence-free vector fields $\mathbf{u} \colon E \to \R^d$ is less degenerate than the one of the mean curvature flow $\int_{\partial E} V^2 dS$ defined on normal velocities $V$, but more degenerate than the one of the two-phase Mullins-Sekerka problem $\int_{\R^d} |\mathbf{u}|^2dx$, in which the ferro-liquid is assumed to be surrounded by another liquid of the same viscosity.

For more physical motivation, we refer to the introduction of  \cite{otto1998dynamics} and the references therein. 

\medskip

\tl{Let us recall} the implicit time discretization introduced by F.~Otto in \cite{otto1998dynamics}: Given a time-step size $h>0$, and initial conditions $E_0\subset \R^d$, for $n\ge1$\tl{,} find $E_n$ solving
\begin{equation}\label{eq:mm}
\min_E  \Big\{\frac{1}{2h}W_2^2(\chi_E,\chi_{E_{n-1}}) + \Per(E) + \int_E k*\chi_E \,dx \Big\}.
\end{equation}
Here $P(E):= \sup\{ -\int_E \Div\xi \,dx \colon \sup |\xi|\leq 1\}$ denotes the perimeter of $E\subset \R^d$ and
\begin{equation}\label{eq:def W2}
W_2^2(\chi_{E}, \chi_F) 
=  \inf \int_E |x-T(x)|^2 dx
=  \min \iint |x-y|^2 d\gamma(x,y)
\end{equation}
denotes the squared Wasserstein distance, where the infimum runs over all transport maps, i.e., volume preserving diffeomorphisms $T\colon \R^d\to \R^d $ such that $T_\sharp\chi_{E}\tl{(x)dx}=\chi_F\tl{(y)dy}$, and the minimum runs over all transport plans, i.e., finite measures $\gamma$ in $\R^d\times \R^d$ with marginals $\chi_E(x) dx$ and $\chi_F(y)dy$. 
\tl{Note that the Wasserstein distance in the minimization term automatically enforces the volume constrained $|E_n| = |E_{n-1}|$, just like the system of partial differential equations \eqref{eq:MS_1a_strong}--\eqref{eq:MS_2b_strong} above.} 

We denote \tl{the total energy by}
\begin{equation}\label{eq:def energy}
\E(E):= \Per(E)+\int_E k*\chi_E\, dx,
\end{equation}
and let $E_h(t) := E_{[t/h]}$ for $t\ge 0$. 
Our standing assumption on the initial conditions is
\begin{equation}\label{eq:standingassumption}
	  P(E_0) < \infty\quad \text{and}\quad 
	  \int_{E_0} (1+|x|^2) \,dx <\infty.
\end{equation}
 In particular $|E_0| <\infty$ and w.l.o.g.\ by scaling we may assume that $|E_0| = 1$.

	The main result is the following construction of solutions.
\begin{theorem}\label{thm}
  Let $E_0 \subset \R^d$ be initial conditions satisfying \eqref{eq:standingassumption} and let $E_h(t)$ be constructed as above. Then there exists a subsequence $h\downarrow0$\tl{,} an \tl{$L^1$-}continuous one-parameter
   family of finite perimeter sets $E(t)$ \tl{satisfying \eqref{eq:standingassumption},} and a vector field $\mathbf{u} \in L^2(\R^d\times (0,+\infty)\tl{;\R^d})$  such that 
		\[
			\lim_{h\downarrow0} \sup_{t\in [0,T]} |E_h(t)\triangle E(t)|  =0\quad \text{for all } T<+\infty
		\]
		and 		
		\begin{equation}\label{eq:MS_1}
			-\int_0^{+\infty} \int_{E(t)} \left(\partial_t \zeta + \mathbf{u}\cdot \nabla \zeta \right) \,dx\,dt = \int_{E_0} \zeta(0)\,dx
		\end{equation}
		for all $\zeta\in C_0^\infty(\R^d\times[0,+\infty))$\tl{,} and $E(t)$ satisfies the optimal energy dissipation rate
		\begin{equation}\label{eq:MS-halfDG}
		 \E(E(T)) +\int_0^T \int_{E(t)} |\mathbf{u}(x,t)|^2 \,dx\,dt \leq \E(E_0) \quad \text{for \tl{all} } T>0.
		\end{equation}
		
		The measures $\mu^h := \delta_{\nu_{E_h(t)}} \otimes \left| \nabla \chi_{E_h(t)}\right|  dt$ converge, $\mu^h \rightharpoonup \mu = \mu_t \, dt$, to \tl{an oriented integral} varifold $\mu$, i.e., a \tl{non-negative} measure on $(\tilde \nu,x,t) \in \mathbb{S}^{d-1}\times \R^d\times [0,+\infty)$,
		\tl{which satisfies the compatibility condition
		\begin{equation}
			-\nabla \chi_{E(t)} 
			= \nu_{E(t)}|\nabla \chi_{E(t)}| 
			=\int_{\mathbb{S}^{d-1}} \tilde \nu \, \mu_t(d\tilde \nu, \,\cdot\, )
			\quad \text{in the sense of measures}
		\end{equation}
		and in particular} $\left|\nabla \chi_{E(t)} \right|\le \tl{\int_{\mathbb{S}^{d-1}}  \mu_t(d\tilde \nu, \,\cdot\,)}$.
		\tl{Here and throughout, $\nu_{E(t)}=\tl{-} \frac{\nabla \chi_{E(t)}}{|\nabla\chi_{E(t)}|}$ denotes the (measure theoretic) \tl{outer} unit normal of $E(t)$.}
	
		\tl{Furthermore, the tuple $(E,\mathbf{u}, \mu)$ satisfies the distributional equation}
		\begin{equation}\label{eq:MS_2}
			\begin{split}- \int_0^{\infty} \int_{E(t)} \mathbf{u}\cdot \xi \,dxdt 
			=&  \int_0^{\infty} \int \int \left( \Div \xi - \tilde \nu\cdot D\xi \, \tilde \nu \right) d\mu_t(\tilde \nu,x)\,dt\\
			&+ 2 \int_0^{\infty} \int k \ast \chi_{E(t)}\, \xi \cdot \nu_{E(t)} \left|\nabla \chi_{E(t)}\right| dt
			\end{split}
		\end{equation}
		for all $\xi \in C_0^\infty(\R^d\times (0,+\infty),\R^d)$ with $ \Div \xi =0$, as well as the optimal energy dissipation relation
		\begin{align}\label{eq:MS-DG}
		&\mu_T (\mathbb{S}^{d-1}\tl{\times }\R^d) + \int_{E(T)} k\ast \chi_{E(T)}\,dx
			+\frac12 \int_0^T\int_{E(t)} |\mathbf{u}(x,t)|^2 \,dx \,dt\notag\\
			&+  \int_0^T \int \int \left( \Div \xi -\tilde \nu\cdot D\xi \,\tilde \nu \right) d\mu_t(\tilde \nu, x)\,dt  +2 \int_0^T\int k\ast \chi_{E(t)}\,  \xi \cdot\nu_{\tl{E(t)}} \left|\nabla \chi_{E(t)} \right| dt\notag \\
			&\qquad \qquad \qquad \qquad\qquad \qquad \qquad \qquad- \frac12 \int_0^T\int_{E(t)} \left|\xi\right|^2 dx\,dt
			 \leq \E(E_0)
		\end{align}
		for almost all $T<+\infty$ and all $\xi\in C_0^\infty( \R^d\times (0,+\infty),\R^d)$ with $\Div \xi =0$.
\end{theorem}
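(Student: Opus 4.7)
The plan is to follow the classical De Giorgi / Ambrosio-Gigli-Savaré program for the minimizing-movements scheme \eqref{eq:mm}, adapted to the Wasserstein geometry on characteristic functions. Using $E_{n-1}$ itself as competitor in \eqref{eq:mm} and telescoping yields the discrete energy inequality
\[
\E(E_N) + \sum_{n=1}^N \frac{1}{2h} W_2^2(\chi_{E_n},\chi_{E_{n-1}}) \le \E(E_0),
\]
which bounds the perimeters uniformly, gives $1/2$-Hölder continuity of $t \mapsto \chi_{E_h(t)}$ in $W_2$, and, via BV compactness combined with an Ascoli-type argument, produces a subsequence with $E_h(t) \to E(t)$ in $L^1(\R^d)$ uniformly on compact time intervals. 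Defining the piecewise-constant discrete velocity $\mathbf{u}_h(x,t) := (x - T_n(x))/h$ on $E_n\times[(n-1)h,nh)$ via the optimal transport map $T_n$ from $E_n$ to $E_{n-1}$, the same inequality gives $\|\mathbf{u}_h\|_{L^2}^2 \le 2\E(E_0)$, so I extract a weak-$L^2$ limit $\mathbf{u}$.

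For the continuity equation \eqref{eq:MS_1}, I would expand $\int_{E_{n-1}}\zeta - \int_{E_n}\zeta = \int_{E_n}[\zeta(T_n(x))-\zeta(x)]\,dx$ to first order in $T_n(x)-x$, telescope in $n$ together with a discrete $\partial_t\zeta$ term, and control the quadratic remainder by $\|D^2\zeta\|_\infty \sum_n W_2^2(\chi_{E_n},\chi_{E_{n-1}}) \le 2h\,\E(E_0)\|D^2\zeta\|_\infty \to 0$. The identification $\chi_{E_h}\mathbf{u}_h \rightharpoonup \chi_E\mathbf{u}$ is legitimate thanks to strong $L^2_{\mathrm{loc}}$-convergence of $\chi_{E_h}$ paired with the weak $L^2$-convergence of $\mathbf{u}_h$. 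For the distributional equation \eqref{eq:MS_2}, I would perturb $E_n$ by the flow $\Phi_s^\xi$ of a divergence-free $\xi\in C_0^\infty$ (automatically volume-preserving, hence admissible in \eqref{eq:mm}) and differentiate at $s=0$ to obtain the discrete Euler-Lagrange
\[
\int_{E_n}\mathbf{u}_h\cdot\xi\,dx + \int(\Div\xi - \nu\cdot D\xi\,\nu)\,d|\nabla\chi_{E_n}| + 2\int (k\ast\chi_{E_n})\,\xi\cdot\nu\,d|\nabla\chi_{E_n}| = O(h).
\]
The uniformly mass-bounded oriented varifolds $\mu_t^h := \delta_{\nu_{E_h(t)}}\otimes|\nabla\chi_{E_h(t)}|$ converge along a further subsequence to a limit $\mu$; passing to the limit in the equation above delivers \eqref{eq:MS_2}, while the compatibility $-\nabla\chi_{E(t)} = \int\tilde\nu\,\mu_t(d\tilde\nu,\cdot)$ follows by testing $\mu^h$ against $\tilde\nu\cdot\varphi(x)$ and using the $L^1$-strong convergence of $\chi_{E_h(t)}$.

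The sharp energy-dissipation relation \eqref{eq:MS-DG} is the De Giorgi variational piece: use $\Phi_h^\xi(E_{n-1})$ as competitor in \eqref{eq:mm}. Since $\xi$ is divergence-free,
\[
W_2^2(\chi_{\Phi_h^\xi(E_{n-1})},\chi_{E_{n-1}}) \le h^2\int_{E_{n-1}}|\xi|^2\,dx + O(h^3),
\]
while $\E(\Phi_h^\xi(E_{n-1}))$ expands to first order via the same first-variation formulas as above. Substituting into \eqref{eq:mm}, rearranging, summing over $n$, and passing to $h\to 0$---using lower semicontinuity of the perimeter and of $\int|\mathbf{u}|^2$ on the left and continuity of the smooth $\xi$-dependent terms on the right---yields \eqref{eq:MS-DG}; the weaker \eqref{eq:MS-halfDG} is then the $\xi=0$ case.

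I expect the main obstacle to be twofold. First, the quadratic remainders in the discrete Euler-Lagrange and in the competitor expansion involve perimeter- or $L^\infty$-weighted quantities that must be absorbed uniformly in $h$; this is presumably the purpose of the nonlinear interpolation inequality deferred to \S\ref{sec:interpolation inequality}, which likely bounds a critical norm of $\chi_{E_h}$ (or of $k\ast\chi_{E_h}$) in terms of its perimeter and mass. Second, the limiting varifold $\mu$ can carry hidden multiplicity not visible in $\nabla\chi_{E(t)}$---this is precisely why the compatibility is only one-sided, $|\nabla\chi_{E(t)}| \le \int\mu_t(d\tilde\nu,\cdot)$, and why the Mullins-Sekerka equation must be formulated at the varifold level unless convergence of the time-integrated energies is assumed separately, as the authors emphasize in the introduction.
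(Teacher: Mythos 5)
Your treatment of the energy--dissipation estimates has a genuine gap: \eqref{eq:MS-halfDG} is \emph{not} the $\xi=0$ case of your competitor argument. Inserting $E_{n-1}$ (or a flowed copy of it with $\xi=0$) into \eqref{eq:mm} and telescoping only gives $\sum_n \tfrac1{2h}W_2^2(\chi_{E_n},\chi_{E_{n-1}})+\E(E_N)\le\E(E_0)$, whose limit is $\E(E(T))+\tfrac12\int_0^T\int_{E(t)}|\mathbf{u}|^2\,dx\,dt\le\E(E_0)$ --- only \emph{half} of the dissipation claimed in \eqref{eq:MS-halfDG}, and no manipulation of the one-step minimality alone recovers the full coefficient $1$. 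This is exactly why the paper introduces De Giorgi's variational interpolation \eqref{eq: def de giorgi interpolation}: it upgrades the telescoped bound to \eqref{eq:de giorgi one step}/\eqref{eq:DGforDiscr}, which carries the additional dissipation term $\tfrac12\int\bigl(W_2(\chi_{\tilde E_h(t)},\chi_{E_h(t)})/(t-h[t/h])\bigr)^2dt$, and one must then show that the interpolant velocities $\tilde{\mathbf{u}}_h$ converge weakly to the \emph{same} $\mathbf{u}$, so that this extra term also contributes $\tfrac12\int\int_{E(t)}|\mathbf{u}|^2$ in the limit (plus a short continuity argument to get all $T$ rather than a.e.\ $T$). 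Your proposal contains no substitute for this step. A smaller but related misreading: the interpolation inequality of Section \ref{sec:interpolation inequality} is not for absorbing quadratic remainders in Euler--Lagrange expansions; via $W_1\le W_2$ and Corollary \ref{cor:interpolation2} it converts the $W_2$-H\"older estimate into the $L^1$-in-time equicontinuity that your ``Ascoli-type argument'' silently uses, and without it the uniform convergence $\sup_{t\le T}|E_h(t)\triangle E(t)|\to0$ is unjustified.

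The remaining parts are sound and essentially follow the paper: the velocity via the optimal maps (equivalently the Kantorovich potentials), the continuity equation \eqref{eq:MS_1} with the error controlled by $\sum_n W_2^2$, the inner-variation Euler--Lagrange equation and the varifold limit for \eqref{eq:MS_2}, and the one-sided compatibility of $\mu_t$ with $\nabla\chi_{E(t)}$. For \eqref{eq:MS-DG} your route is genuinely different from the paper's (which goes through the local slope $|\partial\E|$ and the pointwise bound \eqref{eq:delE}): you insert the flowed competitor $\Phi^{\xi}_h(E_{n-1})$ directly into \eqref{eq:mm}, keep the metric term for $\tfrac12\int\int|\mathbf{u}|^2$, and read off the first-variation terms; after replacing $\xi$ by $-\xi$ the signs match \eqref{eq:MS-DG}, so this can work and even avoids the interpolants in that part. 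But you must then supply what the paper's fixed-set slope argument does not need: the expansion error $\E(\Phi^{\xi}_h(E_{n-1}))-\E(E_{n-1})-h\,\delta\E(E_{n-1})[\xi]$ has to be $o(h)$ \emph{uniformly in $n$} (about $T/h$ terms are summed); for the perimeter this follows from $P(E_{n-1})\le\E(E_0)$, while for the non-local term, since $k$ is only in $L^1$, one needs the uniform $L^1$-continuity of translations of $k$ together with $|\Phi^\xi_s(E)\triangle E|\lesssim s\,\|\xi\|_\infty P(E)$. Finally, identifying $\liminf_h\E(E_h(T))$ with $\mu_T(\mathbb{S}^{d-1}\times\R^d)+\int_{E(T)}k\ast\chi_{E(T)}\,dx$ at fixed $T$ needs an argument (monotonicity of $t\mapsto\E(E_h(t))$ plus Lebesgue points of $t\mapsto\mu_t$), since $\mu^h\rightharpoonup\mu$ only as space-time measures; this is also why \eqref{eq:MS-DG} is stated for a.e.\ $T$.
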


		\begin{remark}
			The \tl{system} of equations derived in the theorem \tl{is} indeed a weak form of the free boundary problem \eqref{eq:MS_1a_strong}--\eqref{eq:MS_2b_strong} \tl{provided the sets $E(t)$ are (essentially) open}: 
		\begin{enumerate}[(i)]
	\item		The continuity equation \eqref{eq:MS_1} encodes \eqref{eq:MS_1a_strong} \& \eqref{eq:MS_1b_strong}  as well as the initial conditions $E_0$.
			
	\item	\tl{Under the assumption that no hidden boundary lies inside of $E(t)$, i.e., $\operatorname{spt} \mu_t \cap E(t) = \emptyset$,  Equation \eqref{eq:MS_2} encodes both \eqref{eq:MS_2a_strong} (since \eqref{eq:MS_2} says that $\mathbf{u}$ is orthogonal to divergence-free fields in $E(t)$) and the balance of forces \eqref{eq:MS_2b_strong} on the free boundary.}
		
	\item	The last three left-hand side terms involving the test vector field $\xi$ in  \eqref{eq:MS-DG} can be viewed as a fractional Sobolev norm of $H +2k\ast \chi_E$, and \eqref{eq:MS-DG} is a type of De Giorgi inequality, which for a smooth gradient flow characterizes the solution. 
	\end{enumerate}
	\end{remark}
	\begin{remark}
	Note also that we may replace the sum of the first two left-hand side terms in \eqref{eq:MS-DG} by the (smaller) energy $\E(E(T))$. 
		\tl{Let us assume for a moment that the energies converge as $h\to 0$, which in view of the continuity of the non-local term and the lower semi-continuity of the perimeter is equivalent to saying that the perimeters do not drop down as $h\to0$, i.e., }
		\[ 
				\limsup_{h\downarrow 0} \int_0^T P(E_h(t))\,dt \leq \int_0^T P(E(t))\,dt.
		\] 
		\tl{Then} we may replace the measure $\mu_t$ by the $BV$-version $\delta_{\nu_{E(t)}} \otimes \left|\nabla \chi_{E(t)}\right|$ in all terms appearing in \eqref{eq:MS_2} \& \eqref{eq:MS-DG}. \tl{Indeed, in that case, the convergence of the curvature term
		\begin{align*}
			\lim_{h\downarrow0}\int_0^T\int &\left( \Div \xi - \nu_{E_h(t)}\cdot D\xi \nu_{E_h(t)} \right) |\nabla \chi_{E_h(t)}|\\
			&\qquad\qquad=\int_0^T\int \left( \Div \xi - \nu_{E(t)}\cdot D\xi \nu_{E(t)} \right) |\nabla \chi_{E(t)}|
		\end{align*}
		follows directly from Reshetnyak's continuity theorem, see e.g.\ \cite[Theorem 20.12]{maggi2012sets}.}
	\end{remark}
	
\tl{ \begin{remark}
		The more precise structure of the varifold $\mu_t$ is not clear. The integrality (or even rectifiability) of $\mu_t$ away from $\operatorname{supp} |\nabla \chi_{E(t)}|$ does not simply follow from curvature bounds and the control in time on the sets $E_h(t)$. Indeed, it is easy to construct counterexamples for which $\mu_t$ oscillates in time but both estimates are valid.
	\end{remark}

	\begin{remark}
		The optimal energy-dissipation rate (here in form of \eqref{eq:MS-halfDG} or \eqref{eq:MS-DG}) plays a crucial role in recent weak-strong uniqueness proofs and does not follow from the weak formulation \eqref{eq:MS_1} \tl{\& \eqref{eq:MS_2}}.
    \end{remark}

}
	In the following we write $A\lesssim B$ if there exists a generic constant $C=C(d)$ such that $A\leq C\,B$. 
	\section{Compactness}\label{sec:compactness}
\begin{lemma}[Compactness]
		Suppose $E_0$ satisfies \eqref{eq:standingassumption} and let $E_h$ be constructed by the scheme as above. Then
		\begin{equation}\label{eq:W2cont}
			W_2(\chi_{E_h(t)},\chi_{E_h(s)}) \lesssim \E(E_0)^{\frac12} (t-s)^\frac12
		\end{equation}
		and
		\begin{equation}\label{eq:L1cont}
			\left| E_h(t) \triangle E_h(s)\right| \lesssim \E(E_0)^{\frac34} (t-s)^{\frac14}
		\end{equation}
		for all $t>s\geq0$ with $t-s \geq h$. 
		
		Therefore, there exists a subsequence $h\downarrow0$ and a one-parameter family of finite perimeter sets $(E(t))_{t\geq0}$ such that for any $T<+\infty$
				\begin{equation}\label{eq:supconv}
					\sup_{t\in [0,T]} |E_h(t)\triangle E(t)|  \to 0\quad \text{ as }h\downarrow0.
				\end{equation}
				
		Furthermore, the limit satisfies 
		\begin{equation}\label{eq:W2contlimit}
			W_2(\chi_{E(t)},\chi_{E(s)}) \lesssim \E(E_0)^{\frac12} (t-s)^\frac12
		\end{equation}
		and
		\begin{equation}\label{eq:L1contlimit}
			\left| E(t) \triangle E(s)\right| \lesssim  \E(E_0)^{\frac34} (t-s)^{\frac14}
		\end{equation}
		for all $t>s\geq0$.
\end{lemma}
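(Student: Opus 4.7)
The plan is to exploit the gradient-flow structure of the scheme to derive two-time Hölder bounds in $W_2$ and in $L^1$, and then to upgrade pointwise $L^1$-precompactness to uniform-in-time convergence via a generalized Arzelà--Ascoli argument.

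The starting point is the minimizing movements inequality obtained by using the previous step $E_{n-1}$ as a competitor in the minimization \eqref{eq:mm} defining $E_n$:
\[
\frac{1}{2h} W_2^2(\chi_{E_n},\chi_{E_{n-1}}) + \E(E_n) \leq \E(E_{n-1}).
\]
Summing this from $n=1$ to any $N$ yields both the monotonicity $\E(E_n)\leq \E(E_0)$ (and hence the a priori perimeter bound $\Per(E_h(t))\leq \E(E_0)$, since the non-local term is non-negative) and the discrete dissipation estimate
\[
\sum_{n\geq 1} \frac{1}{h} W_2^2(\chi_{E_n},\chi_{E_{n-1}}) \leq 2\,\E(E_0).
\]
From here, \eqref{eq:W2cont} follows: for $t > s$ with $t-s\geq h$, set $N=[t/h]$, $M=[s/h]$, apply the triangle inequality of $W_2$ along $E_M, E_{M+1},\ldots, E_N$, and invoke Cauchy--Schwarz in $n$ together with the bound $(N-M)h \leq 2(t-s)$. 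Combining \eqref{eq:W2cont} and the uniform perimeter bound with the nonlinear interpolation inequality of \S \ref{sec:interpolation inequality}, which I expect to take the form $|E\triangle F|^2 \lesssim (\Per(E)+\Per(F))\,W_2(\chi_E,\chi_F)$ (the exponents in \eqref{eq:L1cont} force precisely this scaling), then yields \eqref{eq:L1cont}.

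For the compactness statement \eqref{eq:supconv}, at each fixed $t$ the functions $\chi_{E_h(t)}$ are uniformly bounded in $BV(\R^d)$ and, moreover, their second moments are uniformly controlled on bounded time intervals: this follows from \eqref{eq:W2cont} with $s=0$ together with the elementary estimate $\int|y|^2\chi_F\,dy \leq 2\int|x|^2\chi_E\,dx + 2\,W_2^2(\chi_E,\chi_F)$ and the standing assumption \eqref{eq:standingassumption}. Tightness plus $BV$-boundedness give $L^1(\R^d)$-precompactness at each $t$, so diagonal extraction over a countable dense subset of times yields a subsequence converging at each such time; the equicontinuity \eqref{eq:L1cont} then promotes this to uniform convergence on $[0,T]$ by a standard Arzelà--Ascoli argument, and simultaneously extends the limit $E(t)$ to all $t\geq 0$ as a Hölder-continuous curve in $L^1$.

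The limiting bounds \eqref{eq:W2contlimit} and \eqref{eq:L1contlimit} are obtained by passing $h\downarrow 0$ in \eqref{eq:W2cont}--\eqref{eq:L1cont}, using the uniform $L^1$-convergence for the latter and the lower semicontinuity of $W_2$ along $L^1$-convergent measures with equi-bounded second moments for the former; continuity in time of the limit then removes the restriction $t-s\geq h$. The only non-routine ingredient is the interpolation inequality, which I expect to be the main obstacle but is deferred to \S \ref{sec:interpolation inequality}; modulo that, everything in this lemma reduces to the one-step energy inequality and a textbook compactness argument.
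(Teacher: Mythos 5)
Your proposal is correct, and the quantitative core is exactly the paper's argument: the one-step minimality inequality with competitor $E_{n-1}$, telescoping to get the dissipation bound and the uniform bound $\Per(E_h(t))\le\E(E_0)$, the triangle inequality plus Cauchy--Schwarz for \eqref{eq:W2cont}, and the nonlinear interpolation inequality of \S\ref{sec:interpolation inequality} for \eqref{eq:L1cont} --- the paper states that inequality with $W_1$ and then uses $W_1\le W_2$ (Jensen), which is precisely the $W_2$-form you anticipated. Where you genuinely deviate is the extraction of the limit: the paper first proves space--time compactness via the Riesz--Kolmogorov theorem in $L^1([0,T]\times K)$ (using the perimeter bound as a spatial modulus of continuity and \eqref{eq:L1cont} as a temporal one), upgrades to global-in-space convergence via the second-moment bound, and only then invokes an Ascoli--Arzel\`a-type argument together with Corollary~\ref{cor:interpolation2} to get \eqref{eq:supconv}; you instead work at fixed times, using the $BV$ bound plus tightness (from the second moments, controlled as you say by $W_2(\cdot,\chi_{E_0})$ and \eqref{eq:standingassumption}) to get $L^1(\R^d)$-precompactness of $\chi_{E_h(t)}$ for each $t$, and then diagonalize over a countable dense set of times and apply the generalized Arzel\`a--Ascoli argument with the equicontinuity \eqref{eq:L1cont}. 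Both routes are sound and rest on the same three ingredients (perimeter bound, moment bound, time equicontinuity); yours bypasses Riesz--Kolmogorov and reaches the uniform-in-time statement a bit more directly, while the paper's gives space--time $L^1_{\rm loc}$ convergence as an intermediate product. One small point you (like the paper) leave implicit and could state explicitly: since \eqref{eq:L1cont} is only valid for $t-s\ge h$, the Arzel\`a--Ascoli step also needs the observation that within a single time step the piecewise-constant interpolant jumps by at most $O(\E(E_0)^{3/4}h^{1/4})$ (apply the interpolation inequality to consecutive steps), so the sub-$h$ oscillation vanishes as $h\downarrow0$; with that remark your argument is complete.
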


\begin{proof}
	Using $E_{n-1}$ as a competitor in \eqref{eq:mm} yields
	\[
	\frac{1}{2h}W_2^2(\chi_{E_n},\chi_{E_{n-1}}) + \E(E_n)
	\le  \E(E_{n-1}),
	\]
	so that after summation in $n$ and telescoping
	\begin{equation}\label{eq:energybound}
	\frac{h}{2} \sum_{n=n_0+1}^{n_1} \left(\frac{W_2(\chi_{E_n},\chi_{E_{n-1}})}{h}\right)^2 + \E(E_{n_1})\le \E(E_{n_0}).
	\end{equation}
	In particular, for any pair of integers $n_1>n_0 \geq0$, we have
	\begin{align*}
	W_2(\chi_{E_{n_0}},\chi_{E_{n_1}}) &\le \sqrt{ (n_1-n_0)h} \left(\sum_{n=n_0+1}^{n_1}
	\frac{1}{h}W_2(\chi_{E_n},\chi_{E_{n-1}})^2\right)^{\frac12}
	\\ & \le \sqrt{(n_1-n_0)h} \sqrt{2(\E(E_{n_0})-\E(E_{n_1}))},
	\end{align*}
	which implies \eqref{eq:W2cont}.
	
	The $L^1$ estimate \eqref{eq:L1cont} then follows from \eqref{eq:W2cont} in conjunction with the interpolation inequality in Corollary \ref{cor:interpolation2} \tl{in Section \ref{sec:interpolation inequality} below} and Jensen's inequality in the form of $W_1(\chi,\tilde\chi) \leq W_2(\chi,\tilde\chi)$.
	
	The energy estimate \eqref{eq:energybound} also yields a uniform bound on the perimeter, \tl{hence}
	\[
	\int |\chi_{E_h(t)}(x+z) - \chi_{E_h(t)}(x) | \,dx \leq |z| P(E_h(t)) \leq |z| \E(E_0),
	\]
	i.e., we have a uniform modulus of continuity in space. Together with the uniform modulus of continuity in time \eqref{eq:L1cont}, which is valid down to scales $h$, this allows us to apply the Riesz-Kolmogorov compactness theorem in $L^1([0,T]\times K)$ for any compact set $K\subset \R^d$ and any $T<+\infty$. A diagonal argument yields
	$\chi_{E_h} \to \chi_E $ in $L^1_{\textup{loc}}([0,\infty)\times \R^d)$. But since $\int |x|^2\chi_{E_h(t)}(x)dx<\infty$, which follows from \eqref{eq:standingassumption} \& \eqref{eq:W2cont}, this implies the $L^1$-convergence globally in space, and locally in time.
        Eventually, an Ascoli-Arzel\`a type argument, together with the estimate
        in Corollary~\ref{cor:interpolation2},
        allows to deduce the local uniform convergence in time, i.e., \eqref{eq:supconv}.
	 The continuity estimates \eqref{eq:W2contlimit} \& \eqref{eq:L1contlimit} then follow immediately.	
\end{proof}

\section{Convergence}\label{sec:convergence}
\begin{proof}[Proof of Theorem \ref{thm}]\hfill 
	
\nopagebreak

\noindent\emph{Step 1: Construction of $\mathbf{u}$ and verification of \eqref{eq:MS_1}.}
By Kantorovich duality
\[
\frac{1}{2h}W^2_2(\chi_{E_n},\chi_{E_{n-1}})
= \sup_{\phi(x)+\psi(y)\le \frac{|x-y|^2}{2h}} \int_{E_n}\phi(x)\, dx + \int_{E_{n-1}}\psi(y)\, dy.
\]
This supremum is reached at $(\phi^n,\psi^n)$ such that
\[
\Phi^n (x) = \frac{|x|^2}{2}-h\phi^n(x),\quad
\Psi^n(y) = \frac{|y|^2}{2} -h\psi^n(y)
\]
are convex conjugates and $(\nabla\Phi^n)_\sharp \chi_{E_n}=\chi_{E_{n-1}}$ \tl{solves the optimal transportation problem \eqref{eq:def W2} defining the distance}  $W_2(\chi_{E_n},\chi_{E_{n-1}})$, see \cite[Theorem 1.3]{brenier1991polar} or \cite[Theorem 2.12]{villani2003topics}. 
 Hence
\begin{equation}\label{eq:W2asDpsi}
\frac{1}{2h}W_2^2(\chi_{E_n},\chi_{E_{n-1}}) = h\int_{E_n}|\nabla\phi^n(x)|^2 dx
= h\int_{E_{n-1}}|\nabla\psi^n(y)|^2 dy
\end{equation}
 and by ~\eqref{eq:energybound}
\begin{equation}\label{eq:energybound2}
h \sum_{n=n_0+1}^{n_1} \int_{E_n}|\nabla\phi^n(x)|^2 dx\le \E(E_{n_0})-\E(E_{n_1}) \le  \E(E_0),
\end{equation}
that is, if we set $\mathbf{u}^n:=\chi_{E_n}\nabla\phi^n$ and $\mathbf{u}_h(t) := \mathbf{u}^{[t/h]}$, then  $\mathbf{u}_h$ is uniformly  bounded in $L^2$. Let
$\mathbf{u}=\mathbf{u}(x,t)$ be a weak limit. Since $(Id-h\nabla \phi^n)_\sharp \chi_{E_n}=\chi_{E_{n-1}}$,  for $\eta(x,t)$ a smooth
test function
\begin{align*}
\frac{1}{h}\int_{\R^d}(\chi_{E_n}-\chi_{E_{n-1}})\,\eta\, dx
= \frac{1}{h} \int_{\R^d} \chi_{E_n}(x)\, (\eta(x)-\eta(x-h\nabla\phi^n(x))) \, dx.
\end{align*}
Using Taylor's theorem in the form $\big|\eta(x)-\eta(x-h\xi) -h\xi \cdot \nabla \eta(x) \big| \leq \frac{h^2}{2} |\xi|^2 \sup_x | \nabla^2 \eta|$, we can replace the right-hand side by
\[
\int_{\R^d} 
\nabla\eta(x)\cdot\nabla\phi^n(x)\chi_{E_n} (x)dx
\]
at the expense of the error
\[
\frac1h \frac{h^2}2 \sup | \nabla^2\eta|  \int_{\R^d} |\nabla\phi^n|^2 \chi_{E_n} \,dx = \sup | \nabla^2\eta|   \frac1{2h} W_2^2(\chi_{E_n},\chi_{E_{n-1}}).
\]
After integration in time, this error term vanishes as $h\downarrow0$ because of \eqref{eq:energybound} and we may pass to the limit in the time-integrated version of the above identity to obtain the continuity equation in form of \eqref{eq:MS_1}. 


\smallskip


\noindent
\emph{Step 2: De Giorgi's interpolation and argument for \eqref{eq:MS-halfDG}.} 
De Giorgi's variational interpolation
\begin{equation}\label{eq: def de giorgi interpolation}
\tilde E_h((n-1)h+t) \in \arg \min_E \Big\{ \frac1{2t} W_2^2(\chi_E,\chi_{E_{n-1}}) + \E(E) \Big\}
\end{equation}
satisfies the identity
\begin{equation}\label{eq:de giorgi one step}
\begin{split}
\frac h2 \bigg( \frac{W_2(\chi_{E_n},\chi_{E_{n-1}})}{h}\bigg)^2 +\frac12 \int_{(n-1)h}^{n h}& \bigg( \frac{W_2(\chi_{\tilde E_h(t+(n-1)h)},\chi_{E_{n-1}})}{t}\bigg)^2 dt\\
  &\qquad \qquad \qquad \leq  \E(E_{n-1}) - \E(E_{n}).
  \end{split}
\end{equation}
Although the proof is contained---in a more general context---in \cite[Theorem 3.1.4]{AGS}, we repeat it here for the reader's convenience.

W.l.o.g.\ we may assume $n=1$; for notational convenience we also drop the index $h$ for this short argument. Defining momentarily
\[
f(t):= \frac1{2t} W_2^2(\chi_{\tilde E(t)}, \chi_{E_0}) + \E(\tilde E(t))
\]
to be the minimal value in the variational problem \eqref{eq: def de giorgi interpolation}, we may compute for $s<t$, using the minimality of $\tilde E(t)$,
\begin{align*}
	f(t)-f(s) 
	&\leq \frac1{2t} W_2^2(\chi_{\tilde E(s)}, \chi_{E_0}) +\E(E(s))- \frac1{2s} W_2^2(\chi_{\tilde E(s)}, \chi_{E_0}) -\E(E(s))\\
	&= \frac{s-t}{2st} W_2^2(\chi_{\tilde E(s)}, \chi_{E_0}).
\end{align*}
Since $s<t$, this implies
\[	
	\frac{f(t)-f(s)}{t-s} \leq - \frac1{2st} W_2^2(\chi_{\tilde E(s)}, \chi_{E_0}) \to - \frac1{2t^2} W_2^2(\chi_{\tilde E(t)}, \chi_{E_0})  \quad \text{as } s\uparrow t.
\]
The analogous reverse inequality may be obtained by using $s>t$ in the above argument with the roles of $s$ and $t$ interchanged. 
Hence $f$ is locally Lipschitz in $(0,h]$ with 
\[
\frac{d}{dt} f(t)  = - \frac1{2t^2} W_2^2(\chi_{\tilde E(t)}, \chi_{E_0})
\]
for almost every $t\in(0,h)$.
For $\varepsilon>0$, integrating this inequality from $t=\varepsilon $ to $t=h$, and then using lower semicontinuity w.r.t.\ the $L^1$ convergence $E(\varepsilon) \to E_0$ yields \eqref{eq:de giorgi one step}.
	
Summing \eqref{eq:de giorgi one step} over $n$ from $n_0+1$ to $n_1$ and telescoping the right-hand side we obtain the sharp energy dissipation inequality:
\begin{equation}\label{eq:DGforDiscr}
\begin{split}
\frac h2  \sum_{n=n_0+1}^{n_1} \bigg( \frac{W_2(\chi_{E_n},\chi_{E_{n-1}})}{h}\bigg)^2 +\frac12 \int_{n_0 h}^{n_1 h} &\bigg( \frac{W_2(\chi_{\tilde E_h(t)},\chi_{E_{h}(t)})}{t-h[t/h]}\bigg)^2 dt\\
&\qquad \quad \leq \E(E_{n_0}) - \E(E_{n_1}).
\end{split}
\end{equation}

By \eqref{eq:W2asDpsi} we have
\[
 \frac{h}{2}\sum_{n=1}^{N} \left(\frac{W_2(\chi_{E_n},\chi_{E_{n-1}})}{h}\right)^2 =  \frac{h}{2}\sum_{n=1}^{N} \int \left|\chi_{E_n}\nabla \phi^n\right|^2dx,
\]
which implies
\begin{equation}\label{eq:lsc_metric}
\int_0^T \int_{E(t)} |\mathbf{u}(x,t)|^2 \,dx\,dt \leq
\liminf_{h\to0}\, \frac{h}{2}\sum_{n=1}^{N} \left(\frac{W_2(\chi_{E_n},\chi_{E_{n-1}})}{h}\right)^2 
\end{equation}
since $\chi_{E_n}\nabla \phi^n = \mathbf{u}^n$ and $\mathbf{u}_h \rightharpoonup \mathbf{u}$ in $L^2$.

Following the same strategy as in Step 1, we can show that $\tilde{\mathbf{u}}_h:=\chi_{\tilde E_h(t)} \nabla \tilde \phi_h(t)$ with $x-(t-h[t/h])\nabla \tilde \phi_h(x,t)$ optimal in $W_2(\chi_{\tilde E_h(t)},\chi_{E_h(t)})$---after passage to a subsequence---weakly converges to the same limit $\mathbf{u} = w-\lim_{h\downarrow0} \mathbf{u}_h$. 
In particular, as before,
\[
\frac{1}{t-h[t/h]}W_2^2(\chi_{\tilde E_h(t)},\chi_{E_{h}(t)}) 
=(t-h[t/h])\int |\nabla \tilde \phi|^2 dx
\]
so that after division by $(t-h[t/h])$ and integration in $t$
\[
\frac12 \int_{0}^{T} \bigg( \frac{W_2(\chi_{\tilde E_h(t)},\chi_{E_{h}(t)})}{t-h[t/h]}\bigg)^2 dt
= \frac12\int_0^T \int |\nabla \tilde \phi_h|^2dx\,dt
 = \frac12 \int_0^T \int | \tilde{\mathbf{u}}_h|^2dx\,dt,
\]
which is again lower semi-continuous. 
This concludes the argument for \eqref{eq:MS-halfDG} \tl{for a.e.\ $T>0$.
	Now let $T>0$ be arbitrary and let $T_n\to T$ such that \eqref{eq:MS-halfDG} holds for each $T_n$. Since $E(T_n)\to E(T)$ in $L^1$, the first term on the left-hand side of \eqref{eq:MS-halfDG} is lower-semicontinuous. The second one is clearly continuous in $T$ so the inequality holds for \emph{all} $T>0$.} 
\tl{We will prove the refined energy dissipation inequality \eqref{eq:MS-DG} later on in Step~4.}


\smallskip

\noindent
\emph{Step 3: Derivation of \eqref{eq:MS_2}.} 
%
%
%
The Euler-Lagrange equation of the minimization problem \eqref{eq:mm} reads
\begin{equation}\label{eq:ELeq}
 - \int_{E_n}  \nabla \phi^n \cdot \xi \,dx = \int \left( \Div \xi - \nu_{E_n} \cdot D\xi \nu_{E_n} + 2k\ast \chi_{E_n}
\xi \cdot \nu_{E_n} \right) \left| \nabla \chi_{E_n}\right| 
\end{equation}
 for all smooth test vector fields $\xi$ with $\Div \xi=0$, where
$\nu_{E_n} = -\frac{\nabla \chi_{E_n}}{|\nabla \chi_{E_n}|}$ denotes the \tl{outer} normal. 

%

The non-local term
\[
\int_0^\infty \int 2k\ast \chi_{E_h(t)} \xi \cdot \nu_{E_h(t)} \left| \nabla \chi_{E_h(t)}\right|dt = \tl{-}
\int_0^\infty \int 2k\ast \chi_{E_h} \xi \cdot \nabla \chi_{E_h}
\]
converges since $\nabla \chi_{E_h} \to \nabla \chi_E$ weakly as measures, and since $k \ast \chi_{E_h(t)}$ converges uniformly, which follows from the strong $L^1$ convergence $\chi_{E_h} \to \chi_E$ and the observation that
\[
\sup |k\ast \chi - k \ast \tilde \chi | \le \int | \chi - \tilde \chi| \,dx
\]
 for any two characteristic functions $\chi,\tilde \chi$ for which only the integrability $\int k =1$ and non-negativity $k\geq0$ are needed.
 
 Since the measures $\mu^h= \mu^h_t dt=\delta_{\nu_{E_h(t)}(x)}\otimes \left|\nabla \chi_{E_h(t)}\right|dt$ are bounded
 (with moreover $\mu^h(\mathbb{S}^{d-1}\times \R^d\times I)\le P(E_0)|I|$ for any $I\subset (0,+\infty)$ measurable), by Banach-Alaoglu, they have a weak-$\ast$ limit $\mu = \mu_tdt$ (after passage to a subsequence).
  Hence we can identify the limit of the first right-hand side term in \eqref{eq:ELeq} as well:
 \[
	 \begin{split}
	 \lim_{h\downarrow0} \int_0^\infty \int (\Div \xi &-\nu_{E_h}\cdot D\xi\, \nu_{E_h}) |\nabla \chi_{E_h}| \,dt\\
	 =&\lim_{h\downarrow0} \int_0^\infty \int \int (\Div \xi -\tilde \nu\cdot D\xi\, \tilde \nu)\,d\mu^h_t(\tilde\nu,x)dt\\
	 = & \int_0^\infty \int \int (\Div \xi -\tilde \nu \cdot D\xi\, \tilde \nu) \,d\mu_t(\tilde\nu,x) dt
	 \end{split}
 \]
 for any test vector field $\xi\in C_0^\infty(\R^d\times(0,+\infty),\R^d)$.
\smallskip

\noindent
\emph{Step 4: Proof of the optimal energy dissipation relation \eqref{eq:MS-DG}.} 
The local slope of $\E$, defined via
\[
|\partial \E(E)| := \limsup_{F\to E} \frac{(\E(E)-\E(F))_+}{W_2(\chi_{E},\chi_{F})},
\]
where the convergence of the sets $F\to E$ is to be understood with respect
to $W_2$, 
satisfies
\[
|\partial \E| (\tilde E((n-1)h+t)) \le \frac{W_2(\chi_{\tilde E((n-1)h+t)},\chi_{E_{n-1}})}{t},
\]
cf.\ \cite[Lemma 3.1.3]{AGS}. 
Applying this to \eqref{eq:DGforDiscr} yields the sharp energy dissipation inequality
\begin{equation}\label{eq:sharpenergyinequality}
\begin{split}
\frac h2 \sum_{n=n_0+1}^{n_1} \left( \frac{W_2(\chi_{E_n},\chi_{E_{n-1}})}{h}\right)^2 + \frac12 \int_{n_0h}^{n_1h} &\left| \partial \E\right|^2(\tilde E_h(t)) \,dt \\
&\qquad \quad \le \E(E_h (n_0h)) - \E(E_h(n_1h)).
\end{split}
\end{equation}


Our goal is to pass to the limit in \eqref{eq:sharpenergyinequality}.
We have already done this for the metric term in Step 2. 
\tl{Rewriting the surface energy as $P(E_h(t)) = \mu^h_t(\mathbb{S}^{d-1}\times \R^d)$, it is clear that the right-hand side terms converge to the desired limits. 
	It remains to show}
\begin{equation}\label{eq:lsc_dE}
\begin{split}
	\int_0^T  \int \left( \Div \xi -\tilde \nu\cdot D\xi \,\tilde \nu\right) \,& d\mu_t(\tilde\nu,x)dt + \int_0^T\int 2 k\ast \chi_{E(t)}\, \xi \cdot \nu_{E(t)}  \,|\nabla \chi_{E(t)} |\,dt \\
	- \frac12 & \int_0^T \int_{E(t)} \left|\xi\right|^2 dx\, dt \leq \liminf_{h\to0}  \frac{1}{2} \int_{0}^{T}|\partial \E|^2 (\tilde E_{\tl{h}}(t))\,  dt
	\end{split}
\end{equation}
for all test vector fields $\xi $ with $\Div \xi =0$.


Given \tl{an arbitrary set of finite perimeter $E$  satisfying \eqref{eq:standingassumption}}, any smooth divergence free vector field $\xi$ provides a one-parameter family of candidates for the $\limsup$ in the definition of the local slope $|\partial \E|(E)$ at $E$ via the inner variations $\partial_s \chi_{E_s} +\xi \cdot \nabla \chi_{E_s} =0$.
\tl{Using the elementary relation $0 \leq \frac12 (\frac ab-b)^2= \frac12 (\frac ab)^2 -a + \frac12 b^2$, this yields}
\[
\begin{split}\frac12|\partial \E|^2(E) 
&\geq \lim_{s\to0} \frac12\left(\frac{\frac1s\left(\E(E)-\E(E_s)\right)_+}{\frac1sW_2(\chi_E,\chi_{E_s})}\right)^2\\
&\geq  \lim_{s\to0}\frac1s \left(\E(E)-\E(E_s)\right)_+ -  \frac1{2s^2}W_2^2(\chi_E,\chi_{E_s}).
\end{split}
\]
On the one hand, since $\xi$ is divergence free, it generates one particular volume-preserving flow from $E$ to $E_s$. More precisely, the rescaled field $s\xi$ solves $\partial_{s'} \chi_{E_{s'}} + \Div( s\xi \chi_{E_{s'}}) =0 $ and transports $E$ to $E_s$ in one unit of time and hence provides a particular candidate for the minimum problem in $W_2$:
\[
W^2_2(\chi_E,\chi_{E_s}) \leq  \int_0^1 \int_{E_{s'}} |s\xi|^2 \,dx\,ds'  = s^2 \int_{E} |\xi|^2 dx  +o(s^2).
\]
On the other hand, we have
\begin{align*}
(\E(E) -\E(E_s))_+
&\geq \E(E) -\E(E_s) \\
&= -s \frac{d}{ds}\Big|_{s=0} \E(E_s) +o(s)\\
&= -s \int\left( \Div \xi - \nu_E \cdot D\xi \, \nu_E \right) \tl{|\nabla \chi_E|} \\
&\quad -2s  \int  k\ast \chi_{E}\, \xi \cdot \nu_{E}  \left| \nabla \chi_E\right|  +o(s).
\end{align*}
Therefore, \tl{for any $E$ satisfying \eqref{eq:standingassumption}, and any test vector field field $\xi$ with $\Div \xi=0$ it holds} 
\begin{equation}\label{eq:delE}
	\begin{split}
		\frac12 |\partial \E|^2(E) \geq  &-\int\left( \Div \xi - \nu_E \cdot D\xi \, \nu_E \right) |\nabla \chi_E|\\&
		 -2 \int  k\ast \chi_{E} \xi \cdot \nu_{E}  \left| \nabla \chi_E\right| 
		- \frac12  \int_{E} |\xi|^2 dx.
	\end{split}
\end{equation}
\tl{Replacing $\xi$ by $-\xi$ and applying this argument to $E_h(t)$, writing the first integral in terms of $\mu^h_t$ as 
	\[
	\int\left( \Div \xi - \nu_{E_h(t)} \cdot D\xi \, \nu_{E_h(t)} \right) |\nabla \chi_{E_h(t)}|
	= \int \left(\Div \xi - \tilde \nu \cdot D\xi \tilde \nu\right) d\mu^h_t(x,\tilde \nu),
	\]
	integrating in $t$, and} taking the limit $h\to0$ yields \eqref{eq:lsc_dE}.
\end{proof}

%
%
%
%

\section{An interpolation inequality}\label{sec:interpolation inequality}

\begin{lemma}\label{lem:interpolation2}
  There exists $C>0$ such that for $u,v\in BV(\R^d)$ with $\int (|u|+|v|)|x|dx<+\infty$,
  one has
  \begin{equation*}
    \|u-v\|_{L^1}^2 \le C  \|u-v\|_{W^{-1,1}} |D(u-v)|(\R^d).
  \end{equation*}
\end{lemma}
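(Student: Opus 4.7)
The strategy is a classical mollification-plus-duality argument. Write $w:=u-v$; the task reduces to proving
\[ \|w\|_{L^1}^2 \le C\,\|w\|_{W^{-1,1}}\,|Dw|(\R^d). \]
Here I interpret $\|w\|_{W^{-1,1}}$ as the dual norm with respect to $W^{1,\infty}$ test functions (equivalently, for the zero-mean differences $\chi_E-\chi_F$ appearing in the application, the Kantorovich--Rubinstein distance $W_1$, via the moment assumption). I would then fix a smooth radially symmetric mollifier $\rho\in C_c^\infty(\R^d)$ with $\int\rho=1$ and let $\rho_\varepsilon(x):=\varepsilon^{-d}\rho(x/\varepsilon)$. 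The plan is to split
\[ \|w\|_{L^1}\le \|w-w\ast\rho_\varepsilon\|_{L^1} + \|w\ast\rho_\varepsilon\|_{L^1}, \]
estimate each piece as a power of $\varepsilon$, and optimize.

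For the first piece I would use the standard $BV$ mollification bound $\|w-w\ast\rho_\varepsilon\|_{L^1}\le C\varepsilon\,|Dw|(\R^d)$, obtained from $w(x)-w\ast\rho_\varepsilon(x)=\int\rho_\varepsilon(y)[w(x)-w(x-y)]\,dy$ together with the translation estimate $\int|w(x)-w(x-y)|\,dx\le|y|\,|Dw|(\R^d)$, which is classical for $BV$ functions.

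For the second piece I would use $L^1$--$L^\infty$ duality:
\[ \|w\ast\rho_\varepsilon\|_{L^1}=\sup_{\|\varphi\|_\infty\le 1}\int(w\ast\rho_\varepsilon)\,\varphi\,dx=\sup_{\|\varphi\|_\infty\le 1}\int w\,(\rho_\varepsilon\ast\check\varphi)\,dx. \]
The convolution $\psi:=\rho_\varepsilon\ast\check\varphi$ satisfies $\|\psi\|_\infty\le 1$ and $\|\nabla\psi\|_\infty\le\|\nabla\rho_\varepsilon\|_{L^1}\le C/\varepsilon$, so $\psi$ is an admissible test function for $\|w\|_{W^{-1,1}}$ after rescaling by a factor of order $C/\varepsilon$, giving $\|w\ast\rho_\varepsilon\|_{L^1}\le (C/\varepsilon)\|w\|_{W^{-1,1}}$.

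Combining the two bounds yields $\|w\|_{L^1}\le (C/\varepsilon)\|w\|_{W^{-1,1}}+C\varepsilon\,|Dw|(\R^d)$; balancing by choosing $\varepsilon^2=\|w\|_{W^{-1,1}}/|Dw|(\R^d)$ (assuming both quantities are nonzero; otherwise the inequality is trivial or reduces to a limit) gives the desired estimate after squaring. There is no really hard step: the main thing to be careful about is the precise convention for $\|\cdot\|_{W^{-1,1}}$ and the role of the moment condition, which ensures $\|w\|_{W^{-1,1}}$ is finite for the difference $w=u-v$ of two integrable functions with the required first-moment integrability and, in the intended applications, matches (up to a constant) the $W_1$-Wasserstein distance invoked in \eqref{eq:L1cont}.
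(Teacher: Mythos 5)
Your proof is correct and is essentially the paper's argument: both split off a mollification at scale $\varepsilon$ (resp.\ $\sigma$), bound the mollified part by $(C/\varepsilon)\|w\|_{W^{-1,1}}$ using the Lipschitz bound $\|\nabla\rho_\varepsilon\ast\cdot\|_{L^\infty}\lesssim \varepsilon^{-1}$, bound the remainder by $C\varepsilon\,|Dw|(\R^d)$ via the $BV$ translation estimate, and optimize in $\varepsilon$; by Fubini and the symmetry of $\rho$, mollifying $w$ (as you do) and mollifying the dual test function (as the paper does) are the same computation. The only cosmetic difference is that the paper first argues for smooth compactly supported $f$ and then extends by approximation, whereas you invoke the translation estimate directly for $BV$ functions, which is equally valid.
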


\begin{proof}
  We first consider, ${f},{g}$ smooth with compact support, a symmetric
  mollifier $\rho$, and $\sigma>0$.  We have
  \[
    \int {f} {g} \,dx = \int {g}\,(\rho_\sigma*{f}) \,dx +
    \int {g} \,({f}-\rho_\sigma*{f})\,dx .
  \]
  For the first integral we use
  $ 
    |\nabla (\rho_\sigma*{g})| \le (\|\nabla\rho\|_{L^1}/\sigma )\|{g}\|_{L^\infty},
  $ 
  hence by symmetry of $\rho$:
  \[
    \int {g}\,(\rho_\sigma*{f}) \,dx = \int {f}\, (\rho_\sigma*{g})\,dx \le \frac{C_1}{\sigma}\|{f}\|_{W^{-1,1}} \|{g}\|_{L^\infty},
  \]
  with $C_1=\|\nabla\rho\|_{L^1}$. For the second integral, we write that for ${f}\in BV(\R^d)\cap C^1(\R^d)$,
  \[
    \int |{f}-\rho_\sigma*{f}|dx =\int \left|\int \int_0^\sigma \rho(\xi) \xi\cdot\nabla {f}(x-t\xi)\,dt\,d\xi \right|dx
    \le C_2\sigma \|\nabla {f}\|_{L^1},
  \]
  where $C_2=\int |\xi|\rho(\xi)d\xi$. We deduce that
  \[
    \int {g}({f}-\rho_\sigma*{f})\,dx \le C_2\sigma \|{g}\|_\infty |D{f}|(\R^d).
  \]
  Hence for any $\sigma>0$
  \[
    \int {f} \,{g}\,dx \le  \|{g}\|_{L^\infty}\left(\frac{C_1}{\sigma} \|{f}\|_{W^{-1,1}} 
    + C_2\sigma |D(u-v)|(\R^d)\right)
  \]
  so that (minimizing the right-hand side \tl{with respect to} $\sigma>0$)
  \[
    \int {f}\,{g}\, dx \le  2\|{g}\|_{L^\infty}\sqrt{C_1C_2 \|{f}\|_{W^{-1,1}}  |D{f}|(\R^d)}.
  \]
  Choosing ${g}$ a mollification of $\mathop{\mathrm{sign}} {f}$ and passing to the limit it follows that
  \[
    \int |{f}|\,dx \le  2\sqrt{C_1C_2 \|{f}\|_{W^{-1,1}}  |D{f}|(\R^d)}.
  \]

  This extends to ${f}\in BV(\R^d)$ such that 
  $\|{f}\|_{-1,1}=\sup_{|\nabla{g}|<1}\int{f}{g} \, dx<+\infty$
   and
  $\int |x||{f}(x)|dx<+\infty$, by approximation.
\end{proof}

\tl{Since $\|\chi_E-\chi_F\|_{W^{-1,1}} = W_1(\chi_E,\chi_F)$ we have the following immediate consequence of Lemma \ref{lem:interpolation2}.}
		
\begin{corollary}\label{cor:interpolation2} For any
  sets $E,F\subset\R^d$ with finite perimeter and with $\int_{E\cup F}|x|dx<+\infty$
  \[
    	|E\triangle F| \lesssim \sqrt{\Per(E)+\Per(F)}\sqrt{W_1(\chi_E,\chi_F)}.
  \]
\end{corollary}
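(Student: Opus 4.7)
The plan is to apply Lemma \ref{lem:interpolation2} directly with the choice $u = \chi_E$ and $v = \chi_F$, and then identify each of the three terms appearing in the resulting inequality with the quantities in the statement of the corollary. No further analytic work should be required beyond standard manipulations; the corollary is described as an immediate consequence of the lemma, and indeed the only step with any content is the Kantorovich--Rubinstein duality mentioned right before its statement.

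First, since $\chi_E$ and $\chi_F$ take values in $\{0,1\}$, the pointwise identity $|\chi_E - \chi_F| = \chi_{E\triangle F}$ gives $\|u-v\|_{L^1(\R^d)} = |E\triangle F|$, which converts the left-hand side of Lemma \ref{lem:interpolation2} into $|E\triangle F|^2$. For the total variation, the triangle inequality for the total variation of signed measures yields
\[
|D(\chi_E - \chi_F)|(\R^d) \leq |D\chi_E|(\R^d) + |D\chi_F|(\R^d) = P(E) + P(F),
\]
which provides the surface-energy factor. Finally, the $W^{-1,1}$ norm of the signed measure $\chi_E - \chi_F$ coincides with $W_1(\chi_E,\chi_F)$ by Kantorovich--Rubinstein duality, as recorded immediately before the corollary; this matches the definition of $\|\cdot\|_{W^{-1,1}}$ used inside the proof of the lemma, where the dual norm is taken against test functions with $|\nabla g|\leq 1$. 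The equality implicitly uses $|E|=|F|$, which holds in the intended application to the minimizing movements scheme because the Wasserstein term enforces volume preservation.

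Combining these three identifications inside the inequality of Lemma \ref{lem:interpolation2} produces
\[
|E\triangle F|^2 \lesssim \bigl(P(E) + P(F)\bigr)\, W_1(\chi_E,\chi_F),
\]
and taking square roots gives the desired estimate. The moment assumption $\int_{E\cup F}|x|\,dx<\infty$ exactly matches the hypothesis $\int(|u|+|v|)|x|\,dx<\infty$ needed for the approximation argument at the end of the proof of Lemma \ref{lem:interpolation2}. I do not anticipate any real obstacle here: the argument is essentially bookkeeping once one has both the interpolation lemma and the standard duality between $W_1$ and Lipschitz functions.
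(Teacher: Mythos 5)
Your proposal is correct and follows exactly the paper's route: apply Lemma \ref{lem:interpolation2} to $u=\chi_E$, $v=\chi_F$, bound $|D(\chi_E-\chi_F)|(\R^d)\le P(E)+P(F)$, and identify $\|\chi_E-\chi_F\|_{W^{-1,1}}=W_1(\chi_E,\chi_F)$ via Kantorovich--Rubinstein duality, which is precisely the one-line justification the paper gives. Your remark that the duality requires $|E|=|F|$ (automatic in the minimizing-movements application, and otherwise $W_1=+\infty$ makes the estimate vacuous) is a fair observation that the paper leaves implicit.
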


\frenchspacing
\bibliographystyle{plain}
\bibliography{lit}

\end{document}